\documentclass[12pt]{amsart}
\usepackage[top=1in, bottom=1in, left=1in, right=1in]{geometry}
\usepackage{amsfonts}
\usepackage{amsmath}
\usepackage{amssymb}
\usepackage{bbm}
\numberwithin{equation}{section}
\usepackage{epsfig}
\usepackage{graphicx}
\usepackage{tabularx}
\usepackage{times}
\usepackage[usenames,dvipsnames]{color}
\usepackage{comment}
\usepackage{mathtools}
\usepackage{bm}
\usepackage{esvect}

\usepackage{hyperref}
\usepackage{breakurl}
\hypersetup{
colorlinks = true,
linkcolor={black},
urlcolor={blue},
citecolor={blue},    
urlcolor = {blue},
citebordercolor = {0.33 .58 0.33},
linkbordercolor = {0.99 .28 0.23}}

\newcommand{\kommentar}[1]{}

\newtheorem{thm}{Theorem}[section]

\newtheorem{lem}[thm]{Lemma}
\newtheorem*{hypothesis*}{Hypothesis}

\theoremstyle{remark}
\newtheorem{rem}{Remark}[section]

\title{A Conditional Refinement of Page's Theorem on zeros of Dirichlet $L$-functions}
\author{Debmalya Basak and Kyle Pratt}
\address{
Debmalya Basak: Department of Mathematics,
University of Illinois Urbana-Champaign,
Urbana, IL, 61801, USA}
\email{dbasak2@illinois.edu}

\address{
Kyle Pratt: Brigham Young University, Department of Mathematics, Provo, UT 84602, USA}
\email{kyle.pratt@mathematics.byu.edu}

\begin{document}

\begin{abstract}
Landau--Siegel zeros are hypothetical zeros of Dirichlet $L$-functions that are close to the point $s=1$. A classic theorem of Page shows at most one such zero can exist among all Dirichlet $L$-functions with conductor $\leq Q$. We show that one can significantly refine Page's theorem under the assumption that all non-real zeros of Dirichlet $L$-functions lie outside a shrinking neighborhood of $s=1$.

\end{abstract}

\maketitle
\setcounter{tocdepth}{1}
\tableofcontents
\section{Introduction}\label{sec : Introduction}

Determining the location of the zeros of an $L$-function is an important, and largely unsolved, problem in number theory. Any ``reasonable'' $L$-function should satisfy an appropriate Riemann Hypothesis, which would precisely locate the zeros, but unconditionally we can only prove weak results on regions where zeros cannot exist. These zero-free regions go back to the work of de la Vall\'ee Poussin and Hadamard on the Riemann zeta function, and are now known for broad classes of $L$-functions (see, e.g., \cite[Theorem 5.10]{IK2004}). 

We focus in this work on Dirichlet $L$-functions. Let $\chi$ be a primitive Dirichlet character of conductor $q\geq 3$, and let $L(s,\chi)$ denote the associated $L$-function. In this case, the zero-free region for $L(s,\chi)$ takes the following form: there is an absolute constant $c_0>0$ such that if $s = \sigma+it$, then $L(s,\chi)$ has at most a single zero in the region
\begin{align}\label{Eq: Classical ZFR}
    \sigma \geq 1 - \frac{c_0}{\log(q(|t|+2))}.
\end{align}
(See \cite[p. 93]{Davenport}.) If the zero exists in this region, then it is necessarily real and the character $\chi$ is quadratic. These exceptional zeros are known as Landau--Siegel zeros, and the associated Dirichlet characters are known as exceptional characters. Of course, the Generalized Riemann Hypothesis implies that no Landau--Siegel zeros exist, but showing unconditionally that no such zeros exist seems to be extremely challenging. Landau--Siegel zeros are a persistent annoyance to number theorists, since the zeros distort the distribution of primes in arithmetic progressions (see, e.g. \cite[Theorem 5.27]{IK2004}).

It follows from Siegel’s lower bound \cite{Siegel1935} on $L(1, \chi)$ that for any $\varepsilon > 0$, there exists a constant $c(\varepsilon)>0$ such that if the exceptional real zero $\beta$ exists, then
\begin{align}\label{Eq: ZFR Siegel}
\beta \leq 1-c(\varepsilon)q_{\chi}^{-\varepsilon}.
\end{align}
Unfortunately, for $\varepsilon< \frac{1}{2}$, no
known proof provides an effective determination of $c(\varepsilon)$ in terms of $\varepsilon$. Define
\[
\mathcal{S} : = \{ \chi \bmod q_{\chi} : \chi \textrm{ primitive and real} \}.
\]
Tatuzawa \cite{Tatuzawa} refined Siegel's lower bound on $L(1,\chi)$. Using \cite[Lemma 1]{Hoffstein} to translate a
lower bound on $L(1, \chi)$ to an upper bound on $\beta$, Tatuzawa’s work shows that for all $\varepsilon>0$, there exists an effectively computable constant $c_3(\varepsilon)>0$ such that
\begin{align} \label{Eq: ZFR Tatuzawa}
\#\left\{\chi \in \mathcal{S}: q_\chi \geq c_3(\varepsilon) \text { and } L(s, \chi) \text { has a real zero in }\left[1-q_\chi^{-\varepsilon}, 1\right)\right\} \leq 1 .
\end{align} 
\par
It is plausible that the zero-free region in \eqref{Eq: ZFR Tatuzawa} can be improved under some additional assumptions on the non-real zeros of $L(s,\chi)$. Consider the hypothesis that if $\chi \in \mathcal{S}$, then all zeros of $L(s, \chi)$ lie on $\operatorname{Re}(s)=1 / 2$ or $\operatorname{Im}(s)=0$. In other words, suppose the Generalized Riemann Hypothesis is assumed to hold only for the non-real zeros, so Landau--Siegel zeros are permitted to exist. Under this hypothesis, Sarnak and Zaharescu \cite[Proof of Theorem 1]{Sarnak-Zaharescu} established the following improvement of Tatuzawa's result: for any $\varepsilon>0$, there exists an effectively computable constant $q_0 = q_0(\varepsilon)>0$ depending only on $\varepsilon$ such that
\begin{align}\label{Eq: Sarnak-Zaharescu ZFR}
\#\left\{\chi \in \mathcal{S}: q_\chi \geq q_0 \text { and } L(s, \chi) \text { has a real zero in }\left[1-\left(\log q_\chi\right)^{-\varepsilon}, 1\right)\right\} \leq 1.
\end{align}
This “exponentiates” the quality of the zero-free regions in \eqref{Eq: ZFR Siegel} and \eqref{Eq: ZFR Tatuzawa}, albeit under a strong assumption regarding the non-real zeros of Dirichlet $L$-functions. The core idea in their approach is the use of the explicit formula combined with positivity arguments. Since extending positivity arguments to complex domains is difficult, the assumption that all non-real zeros of $L(s,\chi)$ lie on the critical line $\operatorname{Re}(s) = \frac{1}{2}$ becomes essential. For a more detailed discussion of this method, see \cite[Section 5]{Iwaniec}.

In \cite{basak2024remarks}, Thorner, Zaharescu, and the first author  used Tur\'an's power sum method to show that \eqref{Eq: Sarnak-Zaharescu ZFR} holds under a much
weaker hypothesis. Fix $0 < \delta < 1/10$.
\begin{hypothesis*}[$H_{\delta}$]
\textup{If $\chi\in \mathcal{S}$, then all the zeros of $L(s,\chi)$ in the disk $|z-1| < \delta$ are real.}
\end{hypothesis*}
Under Hypothesis $H_{\delta}$, it was shown in \cite{basak2024remarks} that for all $\varepsilon>0$, there exists an effectively computable constant $q_0=q_0(\delta, \varepsilon)>0$ such that
\begin{align}\label{Eq: ZFR BTZ}
\#\{\textup{$\chi\in \mathcal{S}$: $q_{\chi}\geq q_0$ and $L(s,\chi)$ has a real zero in $[1-(\log q_{\chi})^{-\varepsilon},1)$}\}\leq 1.
\end{align}

In light of the ongoing nuisance that is Landau--Siegel zeros, and the difficulty in showing these zeros do not exist, there is great interest in circumventing or limiting their influence. A classical result in this direction is due to Page \cite[Lemma 9]{page1935number} (see also \cite[p. 95]{Davenport}), which analyzes this phenomenon for conductors up to a specified range. For $Q \geq 2$, define
\[
\mathcal{S}(Q)=\{\chi\bmod{q_{\chi}}\colon \textup{$\chi$ primitive and real, $1 \leq q_{\chi} \leq Q$}\}.
\]
\begin{thm}[Page] \label{thm: Page}
There exists an absolute constant $c>0$ such that the following holds. For any $Q \geq 2$, we have
\begin{align*}
    \#\{\textup{$\chi\in \mathcal{S}(Q)$: $L(s,\chi)$ has a real zero in $[1-c(\log Q)^{-1},1)$}\}\leq 1.
\end{align*}
\end{thm}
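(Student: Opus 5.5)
The plan is the classical route via the Deuring--Heilbronn phenomenon in its simplest form: a product of two $L$-functions, i.e.\ the Dedekind zeta function of a biquadratic field. Suppose, for contradiction, that $\chi_1\neq\chi_2$ in $\mathcal{S}(Q)$ have real zeros $\beta_1,\beta_2\in[1-c(\log Q)^{-1},1)$. Consider
\[
F(s)=\zeta(s)L(s,\chi_1)L(s,\chi_2)L(s,\chi_1\chi_2).
\]
Here $\chi_1\chi_2$ is real and non-principal, and it is induced by a primitive character of conductor at most $q_1q_2\le Q^2$.

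\textbf{Step 1 (non-negativity).} The function $F$ is the Dedekind zeta function of a biquadratic field, or more elementarily,
\[
-\frac{F'}{F}(\sigma)=\sum_n \Lambda(n)\bigl(1+\chi_1(n)\bigr)\bigl(1+\chi_2(n)\bigr)n^{-\sigma}\ge 0
\]
for real $\sigma>1$. Passing from $\chi_1\chi_2$ to its primitive version changes only the Euler factors at finitely many primes dividing $q_1q_2$. Their contribution to $-F'/F$ is $O(\log\log Q)$, or can be discarded by working with the imprimitive character directly, whose $L$-function has the same zeros near $1$ apart from a harmless factor.

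\textbf{Step 2 (partial fractions).} Apply the Hadamard product identity
\[
-\frac{L'}{L}(\sigma,\chi)=\tfrac12\log q+O(1)-\sum_\rho \mathrm{Re}\frac{1}{\sigma-\rho}
\]
to each of the three $L$-factors, and use $-\frac{\zeta'}{\zeta}(\sigma)=\frac{1}{\sigma-1}+O(1)$. Since every term $\mathrm{Re}\,1/(\sigma-\rho)$ is non-negative, we may drop all zeros except $\beta_1$ (a zero of $L(s,\chi_1)$) and $\beta_2$ (a zero of $L(s,\chi_2)$). This gives, for $1<\sigma\le 2$,
\[
0\le \frac{1}{\sigma-1}-\frac{1}{\sigma-\beta_1}-\frac{1}{\sigma-\beta_2}+C_1\log Q,
\]
where $C_1$ is an absolute constant; the conductors contribute $\tfrac12(\log q_1+\log q_2+\log q_1q_2)\le 2\log Q$.

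\textbf{Step 3 (choice of $\sigma$).} Take $\sigma=1+\frac{\lambda}{\log Q}$ and suppose $1-\beta_i\le c/\log Q$. Then
\[
\frac{1}{\sigma-\beta_i}\ge\frac{\log Q}{\lambda+c}.
\]
Substituting and multiplying through by $(\log Q)^{-1}$ yields
\[
0\le \frac1\lambda-\frac{2}{\lambda+c}+C_1 .
\]
Choose $\lambda$ small enough that $\frac1\lambda-\frac{2}{2\lambda}=0$ would leave room; concretely, take $c=\lambda$ and $\lambda<1/(2C_1)$. Then $\frac1\lambda-\frac{2}{2\lambda}+C_1=C_1$, which is not a contradiction, so $c$ must be taken smaller relative to $\lambda$. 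With $c=\lambda/4$ the expression becomes
\[
\frac1\lambda-\frac{8}{5\lambda}+C_1=-\frac{3}{5\lambda}+C_1<0
\]
once $\lambda<3/(5C_1)$. This is the desired contradiction.

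The main obstacle is Step 2. We need the Hadamard-product estimate uniformly with an absolute $O(\log Q)$ error, including handling the imprimitive character $\chi_1\chi_2$ and the possibility $\chi_1\chi_2=\chi_0$. The latter cannot occur, since $\chi_1\ne\chi_2$ are both primitive. The bookkeeping of the Euler factors at primes dividing $q_1q_2$ is routine but must be done carefully; I would bound their contribution by $\sum_{p\mid q_1q_2}\log p/(p-1)\ll\log\log Q$. Small $Q$ is absorbed by shrinking $c$, since for bounded $Q$ there are finitely many real zeros and all are less than $1$.
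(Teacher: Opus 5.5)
Your argument is correct and is essentially the classical proof the paper points to for this theorem (Page; Davenport, p.~95) rather than reproving it. It uses positivity of $-F'/F(\sigma)$ for $F=\zeta(s)L(s,\chi_1)L(s,\chi_2)L(s,\chi_1\chi_2)$, drops every other zero because $\mathrm{Re}\,(\sigma-\rho)^{-1}\ge 0$, and takes $\sigma=1+\lambda/\log Q$. This is the first-derivative case of the argument the paper refines with higher derivatives and Tur\'an's method for Theorem~\ref{thm: Main Theorem}.

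Two small tidy-ups. First, delete the false start in Step 3: the final choice $c=\lambda/4$ with $\lambda<\min\{3/(5C_1),\log 2\}$ works uniformly for all $Q\ge 2$, so no separate small-$Q$ argument is needed. Second, note that the trivial character in $\mathcal{S}(Q)$ never contributes, since $\zeta$ has no zeros in $(0,1)$.
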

Theorem \ref{thm: Page} allows one to consider only a single exceptional Landau--Siegel zero. The effect of this single exceptional zero can then be mitigated, at least in some kinds of number-theoretic arguments. For example, in \cite{May2016}, sieve weights are chosen to be supported on integers not divisible by the largest prime factor of the conductor of the exceptional character. This allows for suitable control on primes in arithmetic progressions beyond the typical Siegel-Walfisz range (see \cite[Chapter 22]{Davenport}).

Given the usefulness of Theorem \ref{thm: Page}, one naturally wonders whether the result can be strengthened and improved. Our goal here is to show that Theorem \ref{thm: Page} may be refined in a way analogous to how \eqref{Eq: ZFR BTZ} was established, if the zeros of Dirichlet $L$-functions behave suitably. 

\begin{hypothesis*}[$H_f$]
     Let $f(x)$ be a positive real-valued increasing function with $\lim_{x \to \infty} f(x) = \infty$. For $\chi \bmod q_{\chi}$ primitive and real, we say $L(s,\chi)$ satisfies Hypothesis $H_f$ provided all the zeros of $L(s,\chi)$ inside the disk $| s-1 | < 1/f(q_{\chi})$, if any, are real.
\end{hypothesis*}


Our main result is as follows.
\begin{thm} \label{thm: Main Theorem}
Let $\nu : \mathbb{R} \to \mathbb{R}_{>0}$ be a positive, real-valued function satisfying the following conditions:
\begin{itemize}
\item[1. ]  $\lim_{x \to \infty} \nu(x) = 0$.
\item[2. ] For $x > 1$, the function $f(x) := (\log x)^{\nu(x)}$ is increasing, and $\lim_{x \to \infty} f(x) = \infty.$
\end{itemize}
Let $\varepsilon>0$. There exists an effectively computable constant $Q_0 = Q_0(\nu,\varepsilon) \geq 1$ depending at most on $\nu$ and $\varepsilon$ with the following property. For any $Q \geq Q_0$ for which Hypothesis $H_{f}$ holds for all $\chi \in \mathcal{S}(Q^2)$, we have
\begin{align}\label{eq 2}
\#\{\textup{$\chi\in \mathcal{S}(Q)$: $L(s,\chi)$ has a real zero in $[1-(\log Q)^{-\varepsilon},1)$}\}\leq 1.
\end{align}
\end{thm}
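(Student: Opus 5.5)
Suppose, for contradiction, that two distinct characters $\chi_1,\chi_2\in\mathcal{S}(Q)$ have real zeros $\beta_1,\beta_2\in[1-(\log Q)^{-\varepsilon},1)$. We follow Page's classical argument, but replace the classical zero-free region by Tur\'an's power sum method, as in \cite{basak2024remarks}. Set $\chi_3=\chi_1\chi_2$. This is a nonprincipal real character whose primitive conductor is at most $q_{\chi_1}q_{\chi_2}\le Q^2$, which is why $H_f$ is assumed on $\mathcal{S}(Q^2)$. Consider the Dedekind zeta function of the biquadratic field,
\[
F(s)=\zeta(s)L(s,\chi_1)L(s,\chi_2)L(s,\chi_3).
\]
It has nonnegative Dirichlet coefficients, a simple pole at $s=1$, and zeros at $\beta_1$ and $\beta_2$. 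Its analytic conductor is $\ll Q^{4}$ near $s=1$.

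The plan is to apply a Tur\'an power-sum (Hadamard product) argument to high derivatives of $-F'/F$ at a point $s_0=1+\eta$, with $\eta$ a small power of $(\log Q)^{-1}$. Write
\[
(-1)^{k}\frac{1}{k!}\frac{d^{k}}{ds^{k}}\Big(-\frac{F'}{F}\Big)(s_0)=\sum_{n}\frac{\Lambda_F(n)(\log n)^{k}}{k!\,n^{s_0}}\ge 0,
\]
so the Dirichlet-series side is nonnegative. On the other side we use the partial-fraction expansion over zeros $\rho$ of $F$ in the disc $|s_0-\rho|\le r$ with $r$ of size $1/f(Q)$-ish. This gives the pole term $(s_0-1)^{-k-1}$, minus $\sum_\rho (s_0-\rho)^{-k-1}$, plus an error term of size $O(r^{-k}\log Q)$ from the remaining zeros and the Gamma factors.

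Hypothesis $H_f$ is what makes this work. For $Q\ge Q_0$ and characters of conductor at least some $q_1(\nu)$, every zero in the disc is real, so each term $(s_0-\rho)^{-k-1}$ with $\rho$ real and less than $1$ is positive. Dropping all real zeros other than $\beta_1,\beta_2$ only helps. The two zeros then contribute $2\,(s_0-\beta)^{-k-1}$ roughly, with $1-\beta\le(\log Q)^{-\varepsilon}$. Characters of small conductor have effectively computable zero-free regions near $1$, so they can be discarded by enlarging $Q_0$. We then choose $k$ of size about $\eta^{-1}\log$ and $\eta$ so that:
\begin{itemize}
\item the contribution $2(s_0-\beta)^{-k-1}$ exceeds the pole term $(s_0-1)^{-k-1}$ by a factor close to $2\big(\eta/(\eta+(\log Q)^{-\varepsilon})\big)^{k+1}$;
\item the error term $r^{-k}\log Q$ remains negligible.
\end{itemize}
Since $r\approx(\log Q)^{-\nu(Q)}$ with $\nu\to0$, we can take $\eta=(\log Q)^{-\varepsilon/2}$, which is much smaller than $r$, and $k\asymp\log\log Q$. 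With these choices $(\eta/r)^{k}\log Q\to0$, and the zeros' contribution dominates. The identity then forces the right side to be negative, contradicting the nonnegativity of the left side. In fact the sign requires only that two zeros each at distance $\approx\eta$ outweigh one pole, and this is the Page-type counting.

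The main obstacle is balancing parameters uniformly. The error term from zeros outside the disc and from the Archimedean factors must be controlled by $(\eta/r)^{k}\log Q$. This needs $k\log(r/\eta)\gg\log\log Q$, which holds because $r/\eta\ge(\log Q)^{\varepsilon/2-\nu(Q)}$. At the same time we need $(1+(\log Q)^{-\varepsilon}/\eta)^{-k}$ to stay bounded below, which requires $k(\log Q)^{-\varepsilon/2}\to0$; this is fine since $k\asymp\log\log Q$. If counting zeros in the disc causes trouble, we would first try the Tur\'an second main theorem, which picks a good $k$ in a range $[K,2K]$, as in \cite{basak2024remarks}. The effective constant $Q_0$ comes from making all these inequalities hold, and it depends only on $\nu$ and $\varepsilon$.
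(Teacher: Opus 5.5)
Your argument is correct, and it takes a genuinely different and simpler route than the paper. Despite the framing, it never uses Tur\'an's lemma; it is a pure positivity argument. You take $\eta=(\log Q)^{-\varepsilon/2}$, so $1-\beta_i\le\eta^2$. Hypothesis $H_f$ on $\mathcal{S}(Q^2)$, together with the monotonicity of $f$, puts every non-real zero of $F$ outside $|s-1|<1/f(Q^2)$. Hence for large $Q$ each such zero is at distance at least $r:=1/(2f(Q^2))\ge\eta(\log Q)^{\varepsilon/4}$ from $s_0=1+\eta$. Dropping the other real zeros by positivity and bounding the non-real ones trivially via the $O(\log Q)$ zero count gives
\[
2(1+\eta)^{-k-1}\le 1+O\bigl((\eta/r)^{k+1}\log Q\bigr),
\]
which fails for large $Q$ with $k\asymp\log\log Q$ (even a fixed $k>8/\varepsilon$ works).

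The paper instead pushes $\eta$ up to the scale of the non-real-zero-free radius, choosing $1+\eta-\beta_2=1/(e f(q_1q_2))$ and $\ell=\lceil\log\log Q\rceil$. At that scale the non-real zeros are only bounded by a constant multiple of the main term ($U\le B_6$). So Tur\'an's lemma is needed to produce a bounded power $r$ with $\mathrm{Re}\sum_j u_j^r\ge\frac18|u_1|^r$, followed by a Bernoulli-inequality computation.

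Your version exploits the fact that $(\log Q)^{-\varepsilon}$ and $1/f(Q^2)$ differ by a full power of $\log Q$, and so avoids both Tur\'an and the bookkeeping. The power sum method buys robustness when non-real zeros can contribute as much as the main term, which is why it is the template of \cite{basak2024remarks}. Even in the paper it is not essential here: from $U-1\le B_5e^{-\ell}\log Q$, enlarging $\ell$ by a constant would let the triangle inequality do the job.

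A few slips to tidy:
\begin{itemize}
\item The radius must be governed by $f(Q^2)$, i.e.\ by $\nu(Q^2)$ rather than $f(Q)$ or $\nu(Q)$, since the primitive character inducing your $\chi_3$ can have conductor up to $Q^2$.
\item The threshold $q_1(\nu)$ and the detour through small conductors are unnecessary, since a smaller conductor only enlarges the disc in $H_f$.
\item The phrase ``$k$ of size about $\eta^{-1}\log$'' should be discarded. With $k\asymp\eta^{-1}$ the factor $(1+\eta)^{-k-1}$ would no longer be close to $1$, which you need; your later choice $k\asymp\log\log Q$ is the right one.
\end{itemize}
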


For an explicit permissible expression for $Q_0(\nu,\varepsilon)$, see \eqref{Eq: Explicit dependence}. We make a few remarks.
\begin{rem}
We observe that Hypothesis $H_f$ allows the existence of Landau--Siegel zeros. At the same time, it allows non-real zeros of $L(s,\chi)$ to be arbitrarily close to $s=1$ as $q_{\chi}$ tends to infinity. Note that Hypothesis $H_f$ in Theorem \ref{thm: Main Theorem} is weaker than Hypothesis $H_{\delta}$, since for $q_{\chi}$ sufficiently large and any fixed $\delta>0$, we have $1/f(q_{\chi}) <\delta$. In other words, under Hypothesis $H_f$, the non-real zeros of $L(s,\chi)$ lie outside a shrinking neighborhood of $s=1$, unlike in Hypothesis $H_{\delta}$, which forces non-real zeros to lie outside a fixed $\delta$-neighborhood of $s=1$.
\end{rem}
\begin{rem}
It follows from Heath--Brown's zero density estimate in \cite[Theorem 3]{Heath-Brown} that if the $\chi \in \mathcal{S}$ are ordered by conductor $q_\chi$, then a density one subset of $\chi \in \mathcal{S}$ satisfy $H_\delta$, as well as Hypothesis $H_f$ from Theorem \ref{thm: Main Theorem}. In contrast, the hypothesis used in the work of Sarnak and Zaharescu has not been verified for any $\chi \in \mathcal{S}$ yet. 
\end{rem}

\begin{rem}\label{Rem: Bounds on Exceptional Characters}
It is shown in \cite[p. 94]{Davenport} that if $q_1, q_2, \ldots$ forms a sequence of positive integers $q$ with the property that there is a real primitive $\chi \bmod q$ for which $L(s, \chi)$ has a real zero $\beta$ satisfying \eqref{Eq: Classical ZFR}, then one must have
$$
q_{j+1}>q_j^2,
$$
which implies that the number of exceptional primitive quadratic characters with moduli up to $Q$ is $O(\log \log Q)$. Under Hypothesis $H_f$ for all $\chi \in \mathcal{S}(Q^2)$, this bound can be improved to $O_{f}( \log \log \log Q)$. For details, see Section \ref{sec:compare contrast}.
\end{rem}

Finally, we mention that versions of Theorem \ref{thm: Main Theorem} can be obtained for more general families of $L$-functions, such as Dirichlet $L$-functions attached to characters of higher order, and $L$-functions associated to Hecke--Maass cusp forms (see, e.g., Theorems 4.1 and 4.2 in \cite{basak2024remarks} for extensions in a similar context). Later, in Section \ref{sec:compare contrast}, we give further discussion regarding the similarities and differences between Theorem \ref{thm: Main Theorem} and the results from \cite{basak2024remarks}.

\section{Proof of Theorem \ref{thm: Main Theorem}}\label{sec : Proof of Main Theorem}

We apply Tur\'an's power sum method (see \cite[Chap. 9, Lem. 2]{M1994}) to prove Theorem \ref{thm: Main Theorem}. We state the lemma here.

\begin{lem}\label{Turan's Power Sum}
Let $(u_n)_{n \geq 1}$ be a sequence of complex numbers with $\sup_{n} \lvert u_n \rvert =\lvert u_1 \rvert$. Set
\begin{align}\label{U Definition}
U=\frac{\sum_{j \geq 1} \left|u_j\right|}{\left|u_1\right|}.
\end{align}
Then there exists an integer $r$ with $1 \leq r \leq 24 U$ such that
$$
\operatorname{Re} \sum_{j \geq 1} u_j^r \geq \frac{1}{8} \left|u_1\right|^r .
$$
\end{lem}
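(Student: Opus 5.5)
The plan is to normalize, argue by contradiction, and average the power sums $s_r:=\sum_{j\ge1}u_j^r$ against a nonnegative kernel in $r$. Scaling all $u_j$ by $1/|u_1|$, we may assume $|u_1|=1$, $|u_j|\le 1$ for all $j$, and $\sum_j|u_j|=U$ (necessarily $U\ge1$). Write $u_1=e^{i\theta_1}$ and put $N=\lfloor 24U\rfloor$. Suppose, for contradiction, that $\operatorname{Re} s_r<\tfrac18$ for every $1\le r\le N$. The goal is a weighted average of the $s_r$ over $1\le r\le N$ in which $u_1$ contributes a definite positive amount and the remaining $u_j$ together contribute at most a constant times $U$.

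\emph{Controlling the other terms.} Take Fejér weights $c_r=1-\frac{r}{N+1}$ and set $f(z)=\sum_{r=1}^N c_r z^r$. On $|z|=1$, $1+2\operatorname{Re} f$ is the Fejér kernel, hence nonnegative. It is harmonic in the disk with value $1$ at $z=0$, so Harnack's inequality gives
\[
1+2\operatorname{Re} f(z)\ \ge\ \frac{1-|z|}{1+|z|},
\qquad\text{hence}\qquad
\operatorname{Re} f(z)\ \ge\ -|z|\quad (|z|\le1).
\]
Summing over $j$ yields $\sum_j \operatorname{Re} f(u_j)\ge -U$. The point is that the total error is governed by $\sum_j|u_j|$, not by the number of terms. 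The same bound holds for any kernel whose associated trigonometric polynomial is nonnegative with constant term $1$. In particular it holds for rotated kernels $f_\phi(z)=\sum_r c_r e^{-ir\phi} z^r$, since $1+2\operatorname{Re} f_\phi(e^{i\theta})$ is the Fejér kernel evaluated at $\theta-\phi$.

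\emph{Main term from $u_1$ (the expected obstacle).} A fixed real kernel cannot work. The average of $\sum_r w_r\cos r\theta$ over $\theta$ is $0$, so no nonnegative weights make $\sum_r w_r\operatorname{Re}u_1^r$ uniformly positive. The positivity has to come from adapting to $\theta_1$, while the hypothesis only constrains $\operatorname{Re} s_r$. My first attempt is the weights
\[
w_r=c_r\bigl(1+\cos(r\theta_1)\bigr)\ \ge0 .
\]
Then $\sum_r w_r\operatorname{Re} s_r$ equals $\sum_j\operatorname{Re}\bigl[f(u_j)+\tfrac12 f(u_je^{i\theta_1})+\tfrac12 f(u_je^{-i\theta_1})\bigr]$. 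By the Harnack step, this is at least a main term minus $2U$. The main term comes from $j=1$: the summand $\tfrac12 f(u_1e^{-i\theta_1})=\tfrac12 f(1)=\tfrac12\sum_r c_r\asymp N/4$. The other two $j=1$ summands are each $\ge -1$ by the same bound.

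On the other side, the contradiction hypothesis gives $\sum_r w_r\operatorname{Re}s_r<\tfrac18\sum_r w_r$. Using $\sum_r w_r\le 2\sum_r c_r\approx N$, this is at most $N/8$. Comparing, $N/4-O(U)<N/8$, which fails once $N\ge CU$. I then need to check that the constants can be tuned to give $C=24$ and the factor $\tfrac18$; this may require replacing Fejér by a slightly better nonnegative kernel, or restricting $r$ to a dyadic window. The delicate point is the upper bound $\sum_r w_r\operatorname{Re}s_r<\tfrac18\sum_r w_r$, which uses only that $w_r\ge0$. This is why I insist on weights of the form $c_r(1+\cos r\theta_1)$ rather than complex rotations, which would bring in $\operatorname{Im}s_r$.

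Once the constants close, the contradiction shows that some $r\le 24U$ has $\operatorname{Re}s_r\ge\tfrac18$. Undoing the normalization gives $\operatorname{Re}\sum_j u_j^r\ge\tfrac18|u_1|^r$.
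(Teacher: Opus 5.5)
Your proof is correct. The constant check you left open goes through as it stands, so you need neither a better kernel nor a dyadic window.

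Your own bound already suffices. The $j\ge 2$ brackets cost at most $2\sum_{j\ge2}|u_j|=2(U-1)$. The two non-main $j=1$ summands cost at most $1+\tfrac12$. Hence
\[
\sum_{r=1}^N w_r\operatorname{Re} s_r\ \ge\ \frac N4-2U+\frac12 .
\]
Since $\sum_r w_r\le 2f(1)=N$ and $w_r\ge0$, the contradiction hypothesis gives $\sum_r w_r\operatorname{Re}s_r\le N/8$. Together these force $N\le 16U-4$. This is false for $N=\lfloor 24U\rfloor\ge 24U-1$, and already false for $N=\lfloor 16U\rfloor$. If you use Harnack in its sharp form $\operatorname{Re}f(z)\ge -|z|/(1+|z|)$, the $\tfrac12$ improves to $\tfrac54$. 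The normalization also tacitly uses $u_1\neq0$ and $U<\infty$. Finiteness of $U$ makes every power sum absolutely convergent, which justifies swapping the finite $r$-sum with the $j$-sum.

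There is nothing in the paper to compare your route against. The paper never proves this lemma; it quotes it from Montgomery \cite[Chap.~9, Lem.~2]{M1994} and uses it as a black box. Your Fej\'er-kernel-plus-Harnack argument is therefore a self-contained proof rather than a reconstruction of something in the text. It makes explicit the feature the application relies on: the loss is governed by $\sum_j|u_j|$, not by the number of terms. This matters because the zeros of $F$ are infinite in number, and only the weighted sum is controlled, via zero counting. As a by-product, it gives the slightly better range $r\le 16U$.
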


\begin{proof}[Proof of Theorem \ref{thm: Main Theorem}] Let $\varepsilon>0$ be given. Let $Q_0=Q_0(\nu, \varepsilon) \geq 1$ be sufficiently large, depending at most on $\nu$ and $\varepsilon$, and which is explicitly computable in terms of $\nu$ and $\varepsilon$. 

We proceed by way of contradiction. Therefore, suppose there exists some $Q \geq Q_0$ and two distinct real primitive Dirichlet characters $\chi_1$ and $\chi_2$ of conductors 
\begin{align*}
q_1 \leq Q \quad \textrm{and} \quad q_2 \leq Q,
\end{align*} 
such that the following holds: $L(s,\chi_1)$ and $L(s,\chi_2)$ have real zeros $\beta_1$ and $\beta_2$ respectively satisfying
\begin{align}\label{chi EZ}
\beta_1 > 1-(\log Q)^{-\varepsilon}\quad \textrm{and} \quad
\beta_2 > 1-(\log Q)^{-\varepsilon}.
\end{align}
Assume without loss of generality that $q_1 \geq q_2$. We break our argument into several steps.

\subsection{Log-Derivative and Non-Negativity.} We begin with the function
\begin{align*}
F(s)=\zeta(s) L\left(s, \chi_1\right) L(s, \chi_2) L\left(s, \psi\right),
\end{align*}
where $\psi$ is the primitive quadratic character that induces $\chi_1\chi_2$. For $\operatorname{Re}(s)>1$, one has
\begin{align*}
    -\frac{F^{\prime}}{F}(s)=\sum_{n \geq 1} \frac{\Lambda(n)(1+\chi_1(n)+\chi_2(n)+\psi(n))}{n^s} .
\end{align*}
On the other hand, we have the partial fraction expansion
\begin{align}\label{Hadamard Product}
-\frac{F^{\prime}}{F}(s)=\frac{1}{s-1}-\sum_{F(\rho)=0}\left(\frac{1}{s-\rho}+\frac{1}{\rho}\right)+B,
\end{align}
for some constant $B$. Our strategy is to apply Tur\'an's power sum method. However, we do not apply it directly. First, we will raise our quantities of interest to a convenient high power $\ell$, and then apply Tur\'an's power sum method. This leads us to consider multiples of $\ell$, say $k\ell$. With this in mind, we start with \eqref{Hadamard Product} and differentiate both sides $k\ell-1$ times to obtain
\begin{align}\label{Differentiation}
\frac{1}{(s-1)^{k\ell}}-\sum_{F(\rho)=0} & \frac{1}{(s-\rho)^{k\ell}} =\frac{1}{(k\ell-1) !} \sum_{n \geq 1} \frac{\Lambda(n)(1+\chi_1(n)+\chi_2(n)+\psi(n)) (\log n)^{k\ell-1}}{n^s} .
\end{align}
We choose $s=1+\eta$ in \eqref{Differentiation}, for some $\eta>0$ that we will optimize later. Since the right-hand side is non-negative, by taking the real part on both sides of \eqref{Differentiation}, one has
\begin{align}\label{Non-Negativity}
\frac{1}{\eta^{k\ell}}-\operatorname{Re} \sum_{\substack{F(\rho)=0 }} \frac{1}{(1+\eta-\rho)^{k\ell}} \geq 0 .
\end{align}
Next, we rearrange \eqref{Non-Negativity} as
\begin{align}\label{Rearrangement}
\frac{1}{\eta^{k\ell}}-\frac{1}{(1+\eta-\beta_1)^{k\ell}}-\sum_{\substack{F(\rho)=0 \\ \rho \textrm{ real, } \rho \neq \beta_1, \beta_2}} \frac{1}{(1+\eta-\rho)^{k\ell}} \geq \frac{1}{(1+\eta-\beta_2)^{k\ell}} + \operatorname{Re} \sum_{\substack{F(\rho)=0 \\ \operatorname{Im}{\rho} \neq 0}} \frac{1}{(1+\eta-\rho)^{k\ell}} .
\end{align}
Since the terms, if any, in the sum on the left hand side of \eqref{Rearrangement} are positive, we remove them to obtain
\begin{align}\label{Final Simplification}
\frac{1}{\eta^{k\ell}}-\frac{1}{(1+\eta-\beta_1)^{k\ell}} \geq \frac{1}{(1+\eta-\beta_2)^{k\ell}} + \operatorname{Re} \sum_{\substack{F(\rho)=0 \\ \operatorname{Im}{\rho} \neq 0}} \frac{1}{(1+\eta-\rho)^{k\ell}} .
\end{align}
\subsection{Application of Tur\'an's Power Sum Method.} In what follows, $B_0$, $B_1$, $B_2, \dots$ are absolute effectively computable constants. We choose
\begin{align}\label{Eta and l choice}
\eta= \frac{1}{e \cdot f(q_1q_2)}+\beta_2-1, \quad \textrm{and} \quad \ell = \lceil \log \log Q \rceil.  
\end{align} 
Let
\begin{align*}
u_1 = (1+\eta-\beta_2)^{-\ell}.
\end{align*} 
Also, let us write the numbers $(1+\eta-\rho)^{-\ell}$ as a sequence of complex numbers $ u_2, u_3, \dots$, arranged such that $\left|u_2\right| \geq\left|u_3\right| \geq \cdots$. We will require $\left|u_1\right| \geq \left|u_2\right|$ to apply Tur\'an's power sum method. To see this, note that for $Q \geq Q_0$, we have
\begin{align}\label{beta-delta}
\max \{ 1-\beta_1, 1-\beta_2 \}\leq (\log Q)^{-\varepsilon} \leq \frac{1}{2(\log (Q^2))^{\nu(Q^2)}} \leq \frac{1}{2f(q_1q_2)}.
\end{align}
Also, it follows from our choice of $\eta$ in \eqref{Eta and l choice} that
\begin{align}\label{eta-delta}
0 < \eta \leq \frac{3}{4 f(q_1q_2)}.
\end{align}
The upper bound is immediate. The lower bound follows from the observation that for $Q \geq Q_0$,
\begin{align}\label{verifying eta bound}
\eta \geq \frac{1}{e} \cdot \frac{1}{(\log (Q^2))^{\nu(Q^2)}} - (\log Q)^{-\varepsilon} \geq \frac{1}{4} \cdot \frac{1}{(\log (Q^2))^{\nu(Q^2)}}>0.
\end{align}
Hence for any $\rho$ with $\operatorname{Im}(\rho) \neq 0$ such that $F(\rho)=0$, we apply Hypothesis $H_f$ in conjunction with \eqref{beta-delta} and \eqref{eta-delta} to deduce that
\begin{align*}
\lvert 1+\eta-\rho \rvert \geq \sqrt{\eta^2+\frac{1}{f(q_1q_2)^2}} \geq \eta+\frac{1}{2f(q_1q_2)} \geq  1+\eta-\beta_2.
\end{align*}
This ensures that $\left|u_1\right| \geq \left|u_2\right|$ and we are ready to apply Lemma \ref{Turan's Power Sum}. In our case, we have
\begin{align}\label{U defn}
U = \frac{\left|u_1\right|+\sum_{t \in \mathbb{Z}} U_t}{\left|u_1\right|} , 
\end{align}
where 
\[
U_t = \sum_{\substack{j \geq 2 \\ t < \operatorname{Im} \rho_j \leq t+1 }} \left |u_j\right| = \sum_{\substack{j \geq 2 \\ t < \operatorname{Im} \rho_j \leq t+1 }} \lvert 1+\eta-\rho_j \rvert^{-\ell} .
\]
First consider the case when $t  \geq 2$. Note that for all $u_j$ with $t < \operatorname{Im} \rho_j \leq t+1$, we have $\left |u_j\right| \leq t^{-\ell}$. Therefore, using standard zero counting arguments from \cite[Prop. 5.7]{IK2004} (see also Davenport \cite[p. 99]{Davenport}), the number of zeros $\rho_j$ with $t < \operatorname{Im} \rho_j \leq t+1$ is $\leq B_1\left (\log Q +\log(\lvert t \rvert +2)\right )$. Therefore, for $t  \geq 2$, we obtain
\begin{align}\label{t >2}
U_t \leq B_1 \left (\frac{\log Q +\log(\lvert t \rvert +2)}{t^{\ell}} \right),
\end{align}
A concomitant argument shows that for  $t\leq -3$,
\begin{align}\label{t <-3}
U_t \leq B_2 \left (\frac{\log Q +\log(\lvert t \rvert +2)}{\lvert t+1 \rvert^{\ell}} \right).
\end{align}
Finally, when $-2\leq t \leq 1,$ we have $\left |u_j\right| \leq f(q_1q_2)^{\ell}$ for all $j \geq 2$. Hence in this case, we obtain
\begin{align}\label{t small}
U_t \leq B_3 \cdot (\log Q) \cdot f(q_1q_2)^{\ell}.
\end{align}
Therefore, putting together \eqref{t >2}, \eqref{t <-3} and \eqref{t small}, we arrive at
\begin{align}\label{Bounding U_t}
    \sum_{t \in \mathbb{Z}}U_t &\leq B_4\bigg  ( (\log Q) \cdot f(q_1q_2)^{\ell} +\sum_{t \geq 2}\frac{\log Q +\log(t+3)}{t^{\ell}}\bigg) \notag \\
    & \leq B_5 \cdot (\log Q) \cdot f(q_1q_2)^{\ell}.
\end{align}
Finally combining \eqref{Eta and l choice}, \eqref{U defn} and \eqref{Bounding U_t}, we arrive at
\begin{align}\label{Bounding U}
U \leq 1+B_5 \cdot (\log Q) \cdot \left ( 1+\eta-\beta_2 \right)^{\ell} \cdot (f(q_1q_2))^{\ell} \leq B_6.
\end{align}
Applying Lemma \ref{Turan's Power Sum}, it follows that, for some $1 \leq r \leq 24 U \leq B_7$, we have
\begin{align*}
\operatorname{Re} \sum_{j \geq 1} u_j^r \geq \frac{1}{8} \left|u_1\right|^r.
\end{align*}
Substituting this estimate in \eqref{Final Simplification} and choosing $k=r$, we have
\begin{align}\label{eq prefinal}
\frac{1}{\eta^{r\ell}}-\frac{1}{(1+\eta-\beta_1)^{r\ell}} &\geq \frac{1}{(1+\eta-\beta_2)^{r\ell}} + \operatorname{Re} \sum_{\substack{F(\rho)=0 \\ \operatorname{Im}{\rho} \neq 0}} \frac{1}{(1+\eta-\rho)^{r\ell}} \notag \\
&\geq \frac{1}{8(1+\eta-\beta_2)^{r\ell}} .
\end{align}
\subsection{Concluding Arguments.} Multiplying \eqref{eq prefinal} through by $\eta^{r\ell}$, we have
\begin{align}\label{Rewriting}
    1-\left(1- \frac{1-\beta_1}{1+\eta-\beta_1} \right)^{r \lceil \log \log Q \rceil} \geq \frac{1}{8}\left( 1-\frac{1-\beta_2}{1+\eta-\beta_2} \right)^{r\lceil \log \log Q \rceil}.
\end{align}
We now make use of Bernoulli's inequality (see \cite[Lemma 3.4]{basak2024remarks}): if $0<a<1$ and $b>1$, then $ab>1-(1-a)^b$. We choose 
\[
a = \frac{1-\beta_1}{1+\eta-\beta_1} \in (0,1) \quad \textrm{and} \quad b = r \lceil \log \log Q \rceil.
\]
Using Bernoulli's inequality to the left hand side of \eqref{Rewriting}, we have
\begin{align*}
r \lceil \log \log Q \rceil \cdot  \frac{1-\beta_1}{1+\eta-\beta_1} & \geq \frac{1}{8}\left( 1-\frac{1-\beta_2}{1+\eta-\beta_2} \right)^{r\lceil \log \log Q \rceil}
\end{align*}
Therefore, we obtain
\begin{align}\label{Rewriting 2}
1-\beta_1 &\geq  \frac{1}{8} \cdot \frac{1+\eta-\beta_1}{r \lceil \log \log Q \rceil}\left( 1-\frac{1-\beta_2}{1+\eta-\beta_2} \right)^{r \lceil \log \log Q \rceil } \notag \\
&\geq \frac{\eta}{16 r \log \log Q}\left( 1-\frac{1-\beta_2}{1+\eta-\beta_2} \right)^{2r \log \log Q }.
\end{align}
Using the inequality $1-\beta_1 \leq (\log Q)^{-\varepsilon}$ in \eqref{Rewriting 2}, we arrive at
\begin{align}
\frac{1}{(\log Q)^{\varepsilon}} &\geq \frac{\eta}{16 r \log \log Q}  \cdot ( \log Q)^{2 r\log\left(1- \frac{1-\beta_1}{1+\eta-\beta_1} \right)} \label{theta step}.
\end{align}
By \eqref{verifying eta bound}, we have for  $Q\geq Q_0$,
\begin{align}\label{removing loglog}
 \frac{16 r \log \log Q}{\eta} \leq B_8 (\log \log Q) \cdot (\log Q^2)^{\nu(Q^2)} \leq (\log Q)^{\frac{\varepsilon}{2}}.
\end{align}
Therefore substituting \eqref{removing loglog} in \eqref{theta step}, we derive 
\begin{align}\label{theta step 2}
\frac{1}{(\log Q)^{\varepsilon}} \geq ( \log Q)^{2r\log\left(1- \frac{1-\beta_1}{1+\eta-\beta_1} \right)-\frac{\varepsilon}{2}}.
\end{align}
Finally, taking logarithms on both sides of \eqref{theta step 2}, we obtain
\begin{align}\label{Epsilon Bound}
\varepsilon \leq 
-4r\log\left(1- \frac{1-\beta_2}{1+\eta-\beta_2} \right). 
\end{align}
By \eqref{verifying eta bound}, we have
\[\frac{1-\beta_2}{1+\eta-\beta_2} \leq \frac{1}{2}.
\] 
Next, we use the inequality 
\[
-\log (1-y) \leq 2y, \quad 0 \leq y \leq \frac{1}{2},\]
in \eqref{Epsilon Bound} with the choice
\[ y = \frac{1-\beta_2}{1+\eta-\beta_2}.
\]
This yields
\begin{align}
\varepsilon  &\leq \frac{8r(1-\beta_2)}{1+\eta-\beta_2}  \leq \frac{8r}{\eta} \cdot \frac{1}{(\log Q)^{\varepsilon}} \leq  \frac{B_8 (\log Q^2)^{\nu(Q^2)}}{(\log Q)^{\varepsilon}}, \label{final}
\end{align}
where in the right-most inequality, we again use \eqref{verifying eta bound}. Now since $Q \geq Q_0$, we now choose $Q_0$ sufficiently large depending at most on $\nu$ and $\varepsilon$. For instance, if $\lvert \nu(Q^2) \rvert \leq \varepsilon/2$ for all $Q \geq Q_{\nu, \varepsilon}$ say, we let
\begin{align}\label{Eq: Explicit dependence}
Q \geq Q_0(\nu,\varepsilon)  = \max \bigg \{  Q_{\nu,\varepsilon},\exp \left ( \frac{ B_8 \, 2^{\frac{\varepsilon}{2}}}{\varepsilon}\right)^{\frac{2}{\varepsilon}}\bigg \}
\end{align}
to arrive at a contradiction to \eqref{final}, and thereby, to our initial assumption. This concludes the proof of the theorem.
\end{proof}

\section{Discussion and contrast between results}\label{sec:compare contrast}

We now comment on the subtle distinction between the proof of Theorem \ref{thm: Main Theorem} and that of \cite[Theorem 1.1]{basak2024remarks}. For the benefit of the reader, we briefly recall their result here. Let Hypothesis $H_{\delta}$ be as in Section \ref{sec : Introduction}.

\begin{thm}\label{thm: BTZ}
Fix $0<\delta<1 / 10$. If $H_\delta$ is true, then for all $\varepsilon>0$, there exists an effectively computable constant $q_0=q_0(\delta, \varepsilon)>0$ such that
\[
\#\{\textup{$\chi\in \mathcal{S}$: $q_{\chi}\geq q_0$ and $L(s,\chi)$ has a real zero in $[1-(\log q_{\chi})^{-\varepsilon},1)$}\}\leq 1.
\]
\end{thm}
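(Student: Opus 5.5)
This is the Basak--Thorner--Zaharescu result, recalled for comparison with Theorem \ref{thm: Main Theorem}. We plan to re-run the proof of Theorem \ref{thm: Main Theorem}, replacing the shrinking radius $1/f(q_1q_2)$ by the fixed radius $\delta$. Since there is no upper cutoff $Q$ in the statement, we parametrize by conductors instead.

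\textbf{Setup.} Suppose, for contradiction, that $\chi_1\neq\chi_2$ in $\mathcal{S}$ have conductors $q_1\geq q_2\geq q_0$ and real zeros $\beta_i>1-(\log q_i)^{-\varepsilon}$. As before, form
\[
F(s)=\zeta(s)L(s,\chi_1)L(s,\chi_2)L(s,\psi),
\]
where $\psi$ is the primitive character inducing $\chi_1\chi_2$. Its conductor is at most $q_1q_2\le q_1^2$, so every zero count for $F$ involves $\log q_1$. Differentiating $-F'/F$ exactly $k\ell-1$ times and evaluating at $s=1+\eta$ gives \eqref{Final Simplification} verbatim. Positivity of the Dirichlet coefficients lets us discard the real zeros other than $\beta_1,\beta_2$.

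\textbf{Choice of parameters.} Take $\eta=\delta/e+\beta_2-1$, so that $1+\eta-\beta_2=\delta/e$, and $\ell=\lceil\log\log q_1\rceil$. Hypothesis $H_\delta$ applies to each factor of $F$, since $\zeta$ has no zeros near $1$. Hence every non-real zero satisfies $|\rho-1|\ge\delta$, and $|1+\eta-\rho|\ge 1+\eta-\beta_2$ once $q_0$ is large compared with $\delta$. This is the condition $|u_1|\ge|u_j|$ needed for Lemma \ref{Turan's Power Sum}.

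\textbf{Bounding $U$.} Split the zeros into unit height windows, exactly as in \eqref{t >2}--\eqref{Bounding U_t}, with $\log Q$ replaced by $\log q_1$. The windows with $|t|\ge 2$ contribute $O(\log q_1\cdot 2^{-\ell})$, and the near windows contribute $O(\log q_1\,\delta^{-\ell})$. After multiplying by $|u_1|^{-1}=(\delta/e)^{\ell}$, this gives
\[
U\ll 1+(\log q_1)\,e^{-\ell}\ll 1.
\]
The factor $e^{-\ell}=(\log q_1)^{-1}$ is exactly why $\ell$ was chosen to be $\log\log q_1$. Lemma \ref{Turan's Power Sum} then produces some $r\ll 1$ for which \eqref{eq prefinal} holds.

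\textbf{Conclusion.} Apply Bernoulli's inequality to obtain \eqref{Rewriting 2}:
\[
1-\beta_1\gg \frac{\delta}{r\log\log q_1}\,\bigl(1-e(1-\beta_2)/\delta\bigr)^{2r\log\log q_1}.
\]
This step is where the conductor asymmetry matters, and it is the main obstacle. The exponent is
\[
-2r\log\log q_1\cdot O\bigl((1-\beta_2)/\delta\bigr),
\]
and $1-\beta_2$ is small only relative to $\log q_2$, not to $\log q_1$. Since $q_1$ may be far larger than $q_2$, the bound degrades. We would handle this by swapping the roles of the two characters if necessary: put the zero with the smaller $1-\beta$ in the $u_1$ slot, and compare it against the zero attached to the larger conductor. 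Alternatively, one can use the fact that a single $\chi_1$ with a zero this close to $1$ forces $q_2\ge q_1^{c}$ via Page-type repulsion, so that $\log q_1\asymp\log q_2$ up to constants depending on $\varepsilon$. If that normalization goes through, the final logarithmic comparison \eqref{Epsilon Bound}--\eqref{final} yields
\[
\varepsilon\ll_\delta r(\log q_2)^{-\varepsilon}\log\log q_1,
\]
which is a contradiction for $q_0=q_0(\delta,\varepsilon)$ large. Every constant along the way is effectively computable.
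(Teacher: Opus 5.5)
\textbf{There is a genuine gap, in the concluding step.} Everything up to your Bernoulli inequality is fine. It is exactly the paper's argument with $1/f(q_1q_2)$ replaced by $\delta$ and $Q$ replaced by $q_1$. For non-real $\rho$ one even has $|1+\eta-\rho|^2\ge\eta^2+\delta^2$, so $|u_j|\le\delta^{-\ell}$ and $U\ll 1$, as you say.

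The ``conductor asymmetry'' obstacle you describe does not exist. The inequality you end with, $\varepsilon\ll_\delta r(\log q_2)^{-\varepsilon}\log\log q_1$, carries a spurious factor $\log\log q_1$. In that form it gives no contradiction once $\log\log q_1\gg(\log q_2)^{\varepsilon}$, so your proof rests on the proposed normalization $\log q_1\asymp\log q_2$. That normalization is false. Page-type repulsion runs the other way: successive exceptional moduli satisfy $q_{j+1}>q_j^2$, which pushes two exceptional conductors apart, and nothing stops $q_1$ from being enormously larger than $q_2$. Swapping roles does not help either. With $\beta_1$ in the $u_1$ slot and $\beta_2$ on the left, the factor $(1-e(1-\beta_1)/\delta)^{2r\ell}$ is $\ge\frac12$. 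You then only get $\log\log q_1\gg_\delta(\log q_2)^{\varepsilon}/r$, which is a repulsion statement, not a contradiction.

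What you missed is that $\log\log q_1$ cancels. The left side is controlled by the hypothesis on the larger conductor, $1-\beta_1\le(\log q_1)^{-\varepsilon}$, which lives on the same scale as $\ell$. Write $y=e(1-\beta_2)/\delta\le e(\log q_2)^{-\varepsilon}/\delta\le\frac12$ and use $-\log(1-y)\le 2y$. Your inequality then gives
\[
(\log q_1)^{-\varepsilon}\ \ge\ 1-\beta_1\ \gg\ \frac{\delta}{r\log\log q_1}\,(\log q_1)^{-4ry}.
\]
Taking logarithms and dividing by $\log\log q_1$,
\[
\varepsilon\ \le\ 4ry+O\Big(\frac{\log\log\log q_1+\log(r/\delta)}{\log\log q_1}\Big)\ \ll\ \frac{r}{\delta}(\log q_2)^{-\varepsilon}+o(1).
\]
This is a contradiction once $q_2\ge q_0(\delta,\varepsilon)$, and then $q_1\ge q_2$ is large too, so the error term is small.

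So the zero in the $u_1$ slot only needs $1-\beta_2$ to be small in absolute terms. This matches the paper's remark after recalling this theorem: under $H_\delta$ one only needs $1-\beta_2\le(\log q_2)^{-\varepsilon}\le\delta$. The size of $q_1$ relative to $q_2$ genuinely matters only under $H_f$. There the exclusion radius $1/f(q_1q_2)$ depends on $q_1$ and enters the Tur\'an dominance condition $1-\beta_2\le 1/(2f(q_1q_2))$, not the final step.
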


In the proof of Theorem \ref{thm: BTZ}, it is essential to establish an analogue of \eqref{beta-delta} in order to apply Tur\'an's power sum method. For instance, under Hypothesis $H_{\delta}$ and assuming $q_1 \geq q_2$, we require that
\begin{align}\label{Eq : Crucial 1}
1-\beta_2 \leq (\log q_2)^{-\varepsilon} \leq \delta.
\end{align}
This condition is satisfied for sufficiently large $q_2$ depending on $\delta$. 

In contrast, under Hypothesis $H_{f}$, we require the stronger condition
\begin{align}\label{Eq : Crucial}
1-\beta_2 \leq (\log q_2)^{-\varepsilon} \leq \frac{1}{2 f(q_1q_2)}.
\end{align}
Note, however, that \eqref{Eq : Crucial} may fail when $q_1$ is significantly larger than $q_2$. For example, if $f(x) = \log \log x$ and we set $q_1 = \lceil q_2^{-1} \exp(q_2) \rceil$, then \eqref{Eq : Crucial} is no longer satisfied. To elaborate further, applying Tur\'an's power sum method requires some control over the real zero $\beta_2$ from $L(s,\chi_2)$ and all the non-real zeros arising from $F(s)$, with $F(s)$ as in Section \ref{sec : Proof of Main Theorem}. Under Hypothesis $H_\delta$, the non-real zeros from $F(s)$, in particular, those from $L(s,\psi)$, are still at a distance $\geq\delta$ from $s=1$. Thus, by taking $q_2$ sufficiently large in terms of $\delta$ and choosing $\eta$ appropriately, we ensure that for any complex zero $\rho$,
\[
\lvert 1+\eta -\rho \rvert \geq 1+\eta - \beta_2.
\]
On the other hand, with Hypothesis $H_f$ and $f(x) = \log \log x$, say, the non-real zeros from $L(s,\psi)$ could be at a distance as small as $1/f(q_1q_2) \approx 1/(\log q_2)$ from $s=1$. This is too close to $s=1$, or even $s=1+\eta$, compared to $\beta_2$ which could be at a distance $1/(\log q_2)^{\varepsilon}$ from $s=1$. Therefore, Tur\'an's power sum method fails in this situation. To overcome this issue, it is necessary to impose an upper bound on $q_1$, as in the framework of Theorem \ref{thm: Main Theorem}.

Another subtle distinction between Theorem \ref{thm: Main Theorem} and Theorem \ref{thm: BTZ} is that the conclusion of Theorem \ref{thm: Main Theorem} involves $\log Q$, which can make direct comparison to the zero-free region \eqref{Eq: Classical ZFR} difficult if $q$ is small compared to $Q$. However, if one works with characters having conductors in a dyadic interval $Q < q \leq 2Q$, say, then the conclusion of Theorem \ref{thm: Main Theorem} can be directly compared to the zero-free region, as in Theorem \ref{thm: BTZ}.

We now discuss Remark \ref{Rem: Bounds on Exceptional Characters}. Suppose $q_1, q_2, \dots$ form a sequence of exceptional moduli. We wish to bound the number of such $q$'s up to $Q$. Let $A>0$ be a fixed large constant and $\varepsilon$ be small enough depending only on $A$. Assume Hypothesis $H_f$ for all $\chi \in \mathcal{S}(Q^2)$. We may assume without loss of generality that $q_1>Q_0(\nu,\varepsilon)$. We apply Theorem \ref{thm: Main Theorem} with the choice
\[
Q_1 = \exp( (\log q_1)^A).
\]
It follows that
\begin{align}\label{Eq: Set Counting}
\#\{\textup{$\chi\in \mathcal{S}(Q_1)$: $L(s,\chi)$ has a real zero in $[1-(\log Q_1)^{-\varepsilon},1)$}\}\leq 1.
\end{align}
Since the exceptional character modulo $q_1$ is already counted in the left hand side of \eqref{Eq: Set Counting}, it follows that
\[
q_2 > Q_1 = \exp( (\log q_1)^A).
\]
Repeatedly arguing as above, one obtains
\[
q_{j+1} > \exp( (\log q_1)^A), \quad j \in \mathbb{N},
\]
which shows that the number of exceptional characters with moduli up to $Q$ is $O_{\nu}(\log \log \log Q)$.
\section*{Acknowledgments}
We thank the anonymous referee for helpful suggestions and feedback. We also thank Jesse Thorner and Alexandru Zaharescu for helpful comments. The second author is partially supported by the National Science Foundation (DMS-2418328) and the Simons Foundation (MPS-TSM-00007959).

\bibliographystyle{plain}
\bibliography{references}

\section*{Data availability statement}

No data are associated with this article.

\section*{Conflict of interest statement}

The authors have no financial or proprietary interests in any material discussed in this article.

\end{document}